\DeclareMathOperator{\Aut}{Aut}
\DeclareMathOperator{\Cls}{Cls}
\DeclareMathOperator{\Gen}{Gen}
\DeclareMathOperator{\GL}{GL}
\DeclareMathOperator{\GSp}{GSp}
\DeclareMathOperator{\GU}{GU}
\DeclareMathOperator{\diag}{diag}
\DeclareMathOperator{\isom}{isom}
\DeclareMathOperator{\M}{M}
\DeclareMathOperator{\new}{new}
\renewcommand{\O}{\text{O}}
\DeclareMathOperator{\impart}{Im}
\DeclareMathOperator{\old}{old}
\DeclareMathOperator{\rad}{rad}
\DeclareMathOperator{\SL}{SL}
\DeclareMathOperator{\SO}{SO}
\DeclareMathOperator{\Sp}{Sp}
\DeclareMathOperator{\std}{std}
\DeclareMathOperator{\Sym}{Sym}
\newcommand{\tT}{\intercal}
\numberwithin{equation}{section}
\newtheorem{thm}[equation]{Theorem}
\newtheorem{prop}[equation]{Proposition}
\newtheorem{conj}[equation]{Conjecture}
\theoremstyle{remark}
\newtheorem{rmk}[equation]{Remark}
\newtheorem{example}[equation]{Example}
\newcommand{\C}{\mathbb{C}}
\newcommand{\Q}{\mathbb{Q}}
\newcommand{\Z}{\mathbb{Z}}
\newcommand{\calH}{\mathcal{H}}
\newcommand{\frakp}{\mathfrak{p}}
\newcommand{\Lambdabar}{\overline{\Lambda}}
\newcommand{\defi}[1]{\textsf{#1}} % for defined terms
\title[Paramodular forms database]{A database of paramodular forms from quinary orthogonal modular forms}
\author{Eran Assaf}
\address{Department of Mathematics, Dartmouth College, 6188 Kemeny Hall, Hanover, NH 03755, USA}
\email{assaferan@gmail.com}
\urladdr{\url{http://www.math.dartmouth.edu/~eassaf/}}
\author{Watson Ladd}
\address{Akamai Technologies}
\email{watsonbladd@gmail.com}
\author{Gustavo Rama}
\address{Facultad de Ingeniería, Universidad de la República, Montevideo, Uruguay}
\email{grama@fing.edu.uy}
\author{Gonzalo Tornar\'ia}
\address{Centro de Matemática, Universidad de la República, Montevideo, Uruguay}
\email{tornaria@cmat.edu.uy}
\author{John Voight}
\address{Department of Mathematics, Dartmouth College, 6188 Kemeny Hall, Hanover, NH 03755, USA}
\email{jvoight@gmail.com}
\urladdr{\url{http://www.math.dartmouth.edu/~jvoight/}}
\begin{document}

\begin{abstract}
We compute tables of paramodular forms of degree two and cohomological weight via a correspondence with orthogonal modular forms on quinary lattices.  
\end{abstract}

\thanks{Assaf and Voight were supported by a Simons Collaboration grant (550029, to Voight).  Rama and Tornar\'ia were partially supported by CSIC I+D 2020/651.}

\maketitle
% \tableofcontents

\section{Introduction}

Number theorists have a longstanding tradition of making tables of modular forms
(for a brief history, see \cite[\S 2]{CMFs}), with myriad applications
to arithmetic and geometry.  Moving beyond $\GL_2$, there has been
substantial interest in similar catalogues of Siegel modular forms, as
computed first for $\Sp_4(\Z)$ by Kurokawa \cite{Kurokawa} and then
more systematically by Skoruppa \cite{Skoruppa}, Raum \cite{Raum}, and many
others.  Moving beyond trivial level, we find several interesting families of congruence subgroups of symplectic groups.  Among them, the paramodular
groups have recently received considerable interest, owing in part to
their agreeable theory of newforms \cite{gsp4new} as well as
applications in the Langlands program \cite{BrumerKramer,Gross}.
Direct computations of paramodular forms have focused on the more
troublesome case of (noncohomological) weight $2$
\cite{PoorYuen,BPY,PSY} (analogous to weight $1$ classical modular
forms), working with Fourier expansions using clever and sophisticated techniques. 

In this paper, we report on our computation of a moderately large database of
paramodular forms for $\GSp_4$ (i.e., degree $2$), but via a
complementary approach in weight $\geq 3$: we compute with algebraic
modular forms \cite{Gross:alg} on orthogonal groups of positive
definite quadratic forms in five variables.
Instead of working with Fourier expansions, we access only the
underlying systems of Hecke eigenvalues (enough for Galois
representations and $L$-function).
An explicit correspondence between orthogonal and paramodular forms
was first conjectured by Ibukiyama \cite{IK, Ibukiyama} and recently
proven by van Hoften \cite{vH}, R\"osner--Weissauer \cite{RW}, and
Dummigan--Pacetti--Rama--Tornar\'ia \cite{dprt:}, with a certain pesky asterisk (see \Cref{conj:dohweaklyeis}).  Our approach using quinary quadratic forms is analogous to the use of
ternary quadratic forms to compute classical modular forms introduced
by Birch \cite{ternary} (see also
\cite{tornariathesis,rama_msc,Hein:,htv}).
Our algorithms involve lattice methods, as described by
Greenberg--Voight \cite{GV} and further developed and investigated by
Hein \cite{Hein:}, Ladd \cite{Ladd}, and Rama \cite{rama_phd,rama_code}.

The tables computed here supersede data computed by Rama--Tornar\'ia \cite{RT} (squarefree level $N \leq 1000$): we compute in non-squarefull levels $N \leq 1000$, in higher weight, and with more Dirichlet coefficients (see \Cref{tab:my_label} and the surrounding details).  Our data is available online \cite{omfdata}, and it is being incorporated into the $L$-functions and Modular Forms Database (LMFDB) \cite{LMFDB}.

\subsection*{Acknowledgements}
The authors would like to thank Ariel Pacetti for helpful conversations and the anonymous referees for their feedback.  

\section{Background}

% \refr{On the other hand, if the authors decide to, say, double its length and incorporate more detail, it could be quite useful for other researchers thinking about similar projects.}
% \refr{some details on the normalisation of the Hecke operators acting on $S_{k,j}(K(N))$} \refr{add definition of $p_0$-newform} \jv{Add a very short section reviewing definitions first?  (Below we are asked for the definition of newform, which could be given at the start, instead of in situ.)}  \eran{I'll do it.}

\subsection*{Siegel paramodular forms}
We begin with a brief review of some background and setting up notation. Let $g \in \Z_{\geq 1}$, let $V \colonequals \Q^{2g}$, and equip $V$ with the symplectic pairing
\begin{equation}
\begin{aligned}
\langle \,,\, \rangle \colon V \times V &\to \Q \\
\langle x, y\rangle &= x^{\tT} \begin{pmatrix} 0 & I \\ -I & 0 \end{pmatrix} y
\end{aligned}
\end{equation}
where $I$ is the $g \times g$-identity matrix.
Let $\GSp_{2g}(\Q)$ be the group of \defi{symplectic similitudes} of $V$, namely
\begin{equation}
\GSp_{2g}(\Q) \colonequals \{ (\alpha, \mu) \in \GL_{2g}(\Q) \times \Q^{\times} : \langle \alpha x,\alpha y \rangle = \mu \langle x,y\rangle \text{ for all $x,y \in V$}\}.
\end{equation}
Projection onto the second factor yields a character $\mu \colon \GSp_{2g}(\Q) \to \Q^{\times}$, whose kernel $\Sp_{2g}(\Q) \colonequals \ker \mu$ is the group of \defi{symplectic isometries} of $V$.  Let 
\[ \GSp_{2g}^+(\Q) \colonequals \{\alpha \in \GSp_{2g}(\Q) : \det(\alpha) > 0 \} < \GSp_{2g}(\Q). \]

The \defi{Siegel upper half-space} is
\begin{equation}
\calH_g \colonequals \{ z \in \M_g(\C) : \text{$z^\tT = z$ and $\impart(z) > 0$} \},
\end{equation}
the space of $g \times g$ complex symmetric matrices whose imaginary part is positive definite.
The group $\GSp_{2g}^+(\Q)$ acts on $\calH_g$ by
\[ \alpha z = \begin{pmatrix} a & b \\ c & d \end{pmatrix} z = (az+b)(cz+d)^{-1} \]
for $\alpha \in \GSp_{2g}^+(\Q)$ and $z \in \calH_g$.

Let $\rho \colon \GL_g(\C) \to \GL(W)$ be a representation of $\GL_g(\C)$ on a finite-dimensional $\C$-vector space $W$, and let $\Gamma \le \Sp_{2g}(\Q)$ be a discrete subgroup. The space $M_{\rho}(\Gamma$) of \defi{Siegel modular forms} of \defi{level} $\Gamma$ and \defi{weight} $\rho$ is the space of holomorphic functions $f \colon \calH_g \to W$ such that 
\begin{equation} \label{eqn:fgammaz}
f(\gamma z) = \rho(cz+d) f(z)  \qquad \text{ for all $\gamma = \begin{pmatrix} a & b\\ c & d \end{pmatrix} \in \Gamma$}
\end{equation}
together with the requirement that $f$ is holomorphic at the cusps of $\Gamma$ if $g= 1$.
When $g = 1$ and $\rho \colon \C^{\times} \to \C^{\times}$ is $x \mapsto x^k$, we write $M_k(\Gamma) = M_{\rho}(\Gamma)$; when $g = 2$ and $\rho = \rho_{k,j} \colonequals \det^k \otimes \Sym^j(\std)$, we write $M_{k,j}(\Gamma) = M_{\rho}(\Gamma)$. 

This paper is concerned with certain families of discrete subgroups for $g = 1$ and $g=2$.
Let $N$ be a positive integer. We write $\Gamma_0(N) \le \Sp_2(\Q) = \SL_2(\Q)$ $K(N) \le \Sp_4(\Q)$ for the following groups.
$$
\Gamma_0(N) \colonequals \begin{pmatrix} \Z & \Z \\ N \Z & \Z \end{pmatrix} \cap \SL_2(\Q), 
\quad 
K(N) \colonequals \begin{pmatrix}
 \Z & N\Z & \Z & \Z \\ 
 \Z & \Z & \Z &  N^{-1}\Z \\ 
\Z &  N\Z & \Z &  \Z \\
N \Z & N \Z & N \Z & \Z 
\end{pmatrix} \cap \Sp_4(\Q).
$$
The subgroup $K(N)$ is called the \defi{paramodular} subgroup of level $N$.

In particular, we will be interested in the space $M_{k,j}(K(N))$ of \defi{Siegel paramodular forms} of \defi{level} $N$ and \defi{weight} $(k,j) \in \Z_{\ge 0}^2$, and we write $M_k(N) = M_k(\Gamma_0(N))$ for the space of \defi{classical modular forms} of \defi{level} $N$ and \defi{weight} $k$. 

\subsection*{Hecke operators}
For $\alpha=\begin{pmatrix}
    a & b \\ c & d
\end{pmatrix} \in \GSp_4^{+}(\Q)$ (with $a,b,c,d \in \M_2(\Q)$), define a right action of $\GSp_4^+(\Q)$ on functions $f : \calH_2 \to \C$ 
\begin{equation}
(f \vert_{k,j} \alpha)(z) = \mu(\alpha)^{2k+j-3} \rho_{k,j}(cz + d)^{-1} f(\alpha z).
\end{equation}
In particular, the condition \eqref{eqn:fgammaz} for $\rho_{k,j}$ is equivalent to $(f \vert_{k,j} \alpha)(z)=f(z)$ for all $\alpha \in K(N)$ (since $\mu(\alpha)=1$).  

The \defi{Siegel operator} $\Phi \colon M_{k,j}(K(N)) \to M_{j+k}(N)$ is defined by
$$
\Phi(f)(z) = \lim_{t \to \infty} f \begin{pmatrix} z & 0 \\ 0 & it \end{pmatrix},
$$
and we say that $f \in M_{k,j}(K(N))$ is a \defi{cusp form} if $\Phi(f \vert_{k,j} \gamma) = 0$ for all $\gamma \in \Sp_4(\Q)$. We write $S_{k,j}(K(N))$ for the subspace of paramodular cusp forms.

For $p$ prime, let $\lambda_{p,1} \colonequals \diag(1,1,p,p) \in \M_4(\Z)$ and $\lambda_{p,2} \colonequals \diag(1,p,p^2,p) \in \M_4(\Z)$. For $i=1,2$, let $\{ \alpha_{p, i, r} \}_{r}$ be representatives for $K(N) \backslash K(N) \lambda_{p, i} K(N)$, so that
\begin{equation}
K(N) \lambda_{p,i} K(N) = \bigsqcup_{r} K(N) \alpha_{p,i,r};
\end{equation}
we define the \defi{Hecke operator} $T_{p,i} \colon M_{k,j}(K(N)) \to M_{k,j}(K(N)) $ by
$$
T_{p,i}(f) = \sum_{r} f \vert_{k,j} \alpha_{p,i,r}.
$$
The operators $T_{p,i}$ do not depend on the choice of representatives $\{\alpha_{p,i,r}\}$, and they restrict to operators on cusp forms $S_{k,j}(K(N))$ \cite[Proposition 5.2]{PSY18}, which we also denote $T_{p,i}$. 
The Hecke operators $T_{p,i}$ for $p \nmid N$ pairwise commute, and %  and generate the unramified Hecke algebra. 
an \defi{eigenform} $f \in S_{k,j}(K(N))$ is a common eigenvector.

\subsection*{Paramodular oldforms and newforms}

The spaces $S_{k,j}(K(N))$ of paramodular cusp forms admit a theory of oldforms and newforms due to Roberts--Schmidt \cite{RS06}.  Let $p$ be a prime number.  Then there are three level-raising operators, defined as follows:
\begin{equation} \label{eqn:lvl1} 
\begin{aligned}
\theta_{N,p} \colon S_{k,j}(K(N)) &\to S_{k,j}(K(Np)) \\
\theta_{N,p}(f) &= f \vert_{k,j} \begin{pmatrix}
p &  &  & \\
 & p &  &  \\
 &  & 1 &  \\
 &  &  & 1 
\end{pmatrix}
\begin{pmatrix}
 1 & &  & \\
 &  & 1 &  \\
 &  -1 &  &  \\
 &  &  & 1 
\end{pmatrix} \\
&\qquad\qquad +
\sum_{c=0}^{p-1}  f \vert_{k,j} 
\begin{pmatrix}
p &  &  & \\
 & p &  &  \\
 &  & 1 &  \\
 &  &  & 1 
\end{pmatrix}
\begin{pmatrix}
 1&  &  & \\
 & 1 &  &  \\
  &  c & 1 &  \\
 &  &  & 1 
\end{pmatrix}
\end{aligned}
\end{equation}
and
\begin{equation} \label{eqn:lvl2}
\begin{aligned}
\theta_{N,p}' \colon S_{k,j}(K(N)) &\to S_{k,j}(K(Np)) \\
\theta_{N,p}'(f) &= f \vert_{k,j} 
\begin{pmatrix}
p &  &  & \\
 & 1 &  &  \\
 &  & 1 &  \\
 &  &  & 1/p 
\end{pmatrix} +
\sum_{c=0}^{p-1}  f \vert_{k,j} 
\begin{pmatrix}
 1&  &  & c/(pN) \\
 & 1 &  &   \\
  &  & 1 &  \\
 &  &  & 1 
\end{pmatrix}
\end{aligned}
\end{equation}
and finally
\begin{equation} \label{eqn:lvl3}
\begin{aligned}
\eta_{N,p} \colon S_{k,j}(K(N)) &\to S_{k,j}(K(Np^2)) \\
\eta_{N,p}(f) &= f \vert_{k,j} 
\begin{pmatrix}
 p &  &  & \\
 & 1 &  &  \\
  &  & 1 &  \\
 &  &  & p^{-1}
\end{pmatrix}
\end{aligned}
\end{equation}
These three operators $\theta_{N,p}$, $\theta_{N,p}'$, and $\eta_{N,p}$ commute with each other, with the level-raising operators for other primes, and with the operators $T_{\ell,i}$ for all $\ell \nmid Np$ \cite[\S 4]{RS06}.  

For $p \mid N$, the subspace of $p$-\defi{oldforms} in $S_{k,j}(K(N))$ is defined as the sum
\begin{equation}
\begin{aligned}
S_{k,j}^{\textup{$p$-old}}(K(N)) &\colonequals \theta_{N/p, p} S_{k,j}(K(N/p)) + \theta_{N/p, p}' S_{k,j}(K(N/p)) \\
&\qquad\qquad + \eta_{N/p^2, p} S_{k,j}(K(N/p^2))
\end{aligned}
\end{equation}
where the last term only occurs if $p^2 \mid N$; the space of \defi{oldforms} is then
\begin{equation}
S_{k,j}^{\textup{old}}(K(N)) \colonequals \sum_{p \mid N} S_{k,j}^{\textup{$p$-old}}(K(N)).
\end{equation}
The subspace of $p$-\defi{newforms} $S_{k,j}^{\textup{$p$-new}}(K(N)) \subseteq S_{k,j}(K(N))$ is the orthogonal complement of the subspace of $p$-oldforms
%$S_{k,j}^{\textup{$p$-old}}(K(N))$
under the Petersson inner product, and the subspace of \defi{newforms} $S_{k,j}^\textup{new}(K(N))$
is the orthogonal complement of the subspace of oldforms.

% Similarly, we define the space of \defi{$p\text{-new}$ forms} as the orthogonal complement to the forms coming from the level-raising operators at $p$, namely
% \begin{align} \label{eq: p-new subspace}
% \nonumber
% S_{k,j}^{p\text{-new}} & (K(N)) = \\
% &\left(\theta_{\frac{N}{p}, p}(S_{k,j}(K(N/p))) + \theta_{\frac{N}{p}, p}'(S_{k,j}(K(N/p)))
% + \eta_{\frac{N}{p^2}, p}(S_{k,j}(K(N/p^2)))\right)^{\perp}.
% \end{align}

\subsection*{Definite orthogonal modular forms}

As in the introduction, to compute paramodular forms in fact we compute certain orthogonal modular forms.  These spaces of orthogonal modular forms are again defined by a level $\Lambda$ and a weight $W$, as follows.

Let $V$ be a finite-dimensional $\Q$-vector space equipped with a positive definite quadratic form $Q \colon V \to \Q$ and $T \colon V \times V \to \Q$ the associated bilinear form. The \defi{orthogonal group} of $Q$ is the group of $F$-linear automorphisms of $V$ which preserve $Q$, namely 
$$
\O(Q) \colonequals \{ g \in \GL(V) : Q(gx) = Q(x) \textup{ for all $x \in V$}\};
$$
elements of $O(Q)$ are called \defi{isometries}.  Let
$$
\SO(Q) \colonequals \O(Q) \cap \SL(V) 
$$
(the subgroup of isometries that preserve an orientation). 

Let $\rho \colon \SO(Q) \to \GL(W)$ be a representation. We will sometimes refer to $\rho$ by the underlying space $W$. If $W$ is the irreducible representation of $\SO(Q)$ of highest weight $\lambda$, we write $W = W_\lambda$.

Let $\Lambda \subset V$ be a lattice, the $\Z$-span of a $\Q$-basis for $V$.  Suppose that $Q(\Lambda) \subseteq \Z$, in which case we say  $\Lambda$ is \defi{integral}.  For two lattices $\Lambda, \Lambda' \subseteq V$, if there is an isometry $g \in O(V)$ such that $\Lambda' = g (\Lambda)$, we say that $\Lambda$ and $\Lambda'$ are \defi{isometric}, and write $\Lambda \simeq \Lambda'$. 

Making analogous definitions, we have $\Lambda_p \colonequals \Lambda \otimes \Z_p$ a $\Z_p$-lattice in $V \otimes \Q_p$.  The \defi{genus} of $\Lambda$ is the set of lattices which are everywhere locally isometric to $\Lambda$, namely
\begin{equation}
\Gen(\Lambda) \colonequals \{ \Lambda' \subset V : \Lambda_p \simeq \Lambda'_p \text{ for all primes } p \}.
\end{equation}
There is a natural action of $\O(V)$ on $\Gen(\Lambda)$ and the \defi{class set} of $\Lambda$ is the set of global isometry classes
$$
\Cls \Lambda \colonequals \O(V) \backslash \!\Gen(\Lambda).
$$
By the geometry of numbers, the class set is finite; let $h = h(\Lambda) \colonequals \# \Cls \Lambda < \infty$. Let $\Lambda_1, \ldots, \Lambda_h$ be a set of representatives for $\Cls \Lambda$, and let 
\begin{equation}
\SO(\Lambda_i) \colonequals \{ g \in \SO(V) : g(\Lambda_i) = \Lambda_i \}
\end{equation}

The space of \defi{(special) orthogonal modular forms} of \defi{level} $\Lambda$ and \defi{weight} $W$ is the space 
\begin{equation}
M_W(\SO(\Lambda)) = \bigoplus_{i=1}^h W^{\SO(\Lambda_i)}.
\end{equation}
If $W = W_{a,b}$, we also write $M_{a,b}(\SO(\Lambda)) \colonequals M_{W}(\SO(\Lambda))$.
When $W$ is trivial, $(a,b) = (0,0)$, and $M_{W}(\SO(\Lambda)) \simeq \C^{\Cls \Lambda} \simeq \C^h$. 

For the correspondence with paramodular forms, we also need to define certain twists of these spaces by certain characters as follows. Let $p \mid N$ be prime, and let $R_p \colonequals \rad(\Lambda_p \otimes \Z / (2p)\Z)$ be the \defi{radical} of $\Lambdabar \colonequals \Lambda_p \otimes \Z /(2p)\Z$, where
\begin{equation}
\rad(\Lambdabar) \colonequals \{ x \in \Lambdabar : T(x,y) = 0 \text{ for all $y \in \Lambdabar$} \},
\end{equation}
(extending $T$ to $\Lambdabar$).
Since $g_p R_p = R_p$ for all $g_p \in \Aut(\Lambda_p)$, we may define a homomorphism $\vartheta_p \colon \Aut(\Lambda_p) \to \{ \pm 1 \}$ by $ \vartheta_p(g_p) \colonequals \det(g_p \vert_{R_p})$.
Given a unitary divisor $d \parallel N$ (so $d \mid N$ and $\gcd(d,N/d)=1$), we define the associated \defi{radical character} $\vartheta_d \colon \bigcap_{p \mid d} \Aut(\Lambda_p) \to \{ \pm 1 \} $ by $\vartheta_d = \prod_{p \mid d} \vartheta_p$ \cite[\S 7]{dprt:}.  Although $\vartheta_d$ is not defined on all of $\SO(V)$, as it is defined almost everywhere, using weak approximation we choose representatives $\Lambda_i$ of the class set so that it is defined on $\Aut(\Lambda_i)$ for all $i$.  

Finally, we write 
\begin{equation}
M_{a,b}(\SO(\Lambda), \vartheta_d) \colonequals M_{W_{a,b} \otimes \vartheta_d} (\SO(\Lambda)).
\end{equation}

\subsection*{Definite orthogonal methods for paramodular forms}%\GT{}

The relation between modular forms on $\SO(5)$
% \jv{should this be $\mathrm{GSO}$, or the latter be $\Sp(4)$?}
%\GT{I think $\SO(5)$ and $\mathop{\rm PGSp}(4)$, right? Point is, both groups are isomorphic over $\C$ but not over $\Q$ or $\mathbb R$} \jv{Yes, and you're right it's better to add trivial central character}
and automorphic forms on $\GSp(4)$ with trivial central character is predicted by Langlands functoriality. An
explicit correspondence was conjectured by Ibukiyama in
\cite{IK,Ibukiyama} involving two steps:
a correspondence between (para)modular
forms of $\GSp(4)$ and its compact twist $\GU(2,B)$,
where $B$ is a definite quaternion algebra;
and a correspondence between modular forms of $\GU(2,B)$ and those of
$\SO(Q)$ for a suitable chosen quinary quadratic form $Q$.
The first correspondence was proved by van Hoften \cite{vH} and
R\"osner--Weissauer \cite{RW}, and extended by
Dummigan--Pacetti--Rama--Tornar\'ia \cite{dprt:}, where the second
correspondence was also proved.

More precisely, using \cite[Theorem 9.9]{dprt:},
we can compute the space $S^{\text{new}}_{k,j}(K(N))$ of paramodular
newforms of level $N$ and weight $(k,j)$ under the following assumptions:
\begin{enumerate}
    \item There is a prime $p_0$ such that $p_0\parallel N$ (i.e., $p_0$ exactly divides $N$); and
    \item $k\geq 3$ and $j \in 2\Z_{\geq 0}$.
\end{enumerate}
(When $j$ is odd, the space is $\{0\}$.)  Suppose that these conditions hold.  Then \cite[Theorem 5.14]{dprt:} there exists a unique genus of  % certain
%\jv{we used to say special here, but that just looks like we mean it in the conventional sense of being not generic; this terminology is unfortunate, and I blame Hijikata--Pizer--Shemanske.  I understand its use but I'd really prefer not to propogate its use here.}
positive definite integral quinary quadratic forms of (half-)discriminant $N$ with Eichler invariants $-1$ at $p_0$ and $+1$ at all other primes \cite[\S5]{dprt:}.
We choose one quadratic form $Q=Q_{N,p_0}$ in the genus, corresponding to a lattice $\Lambda$, noting that the corresponding space of
orthogonal modular forms is independent of this choice.
%These choices ensure that the hypothesis of Theorem 9.8 in
%op.\ cit.\ are satisfied, hence
%every paramodular newform of level $N$ 
%corresponds to some orthogonal modular form for $Q_N$.  \jv{This sentence seems redundant.  We listed the assumptions at the top very clearly.}

Following notation in \cite[\S 8]{dprt:} and for notational convenience, write $a \colonequals k+j-3$ and $b \colonequals k-3$, noting $a\equiv b\pmod2$.  Let $W_{a,b}$ be the corresponding weight representation of $\SO_5(\C)$; and for each $d\parallel N$, let $\vartheta_d$ be the associated radical character.
Let $M_{a,b}(\SO(\Lambda), \vartheta_d)$ for the space of orthogonal modular forms
for $Q_{N,p_0}$ with weight representation $W_{a,b}$ and character
$\vartheta_d$.  

Now let $S^{\text{$p_0$-new}}_{k,j}(K(N))_{\text{\bf(G)}} \subseteq S_{k,j}(K(N))$ be the space of $p_0$-new paramodular cusp forms of general type {\bf(G)}.  The condition \textbf{(G)} of general type concerns the type of the automorphic representation, following Schmidt \cite[\S 1.1]{Schmidt}: it excludes the forms of Saito--Kurokawa type \textbf{(P)} on both sides, the forms of Yoshida type \textbf{(Y)} appearing as orthogonal modular forms, and forms of Eisenstein type (those $f$ whose automorphic representation $\pi_f$ has local components $\pi_{f,p}$ isomorphic to a \emph{noncuspidal} irreducible constituent of a representation parabolically induced from a proper Levi subgroup).  See the section on detecting lifts below for further discussion.  Similarly, let $M_{a,b}(\SO(\Lambda), \vartheta_d)_{\textup{\bf(G)}} \subseteq M_{a,b}(\SO(\Lambda), \vartheta_d)$ be the subspace of forms of general type {\bf(G)}.

\begin{thm}[{\cite[Theorem 9.6]{dprt:}}] \label{thm:isomhecke}
There is an isomorphism of Hecke modules
\[    S^{\textup{$p_0$-new}}_{k,j}(K(N))_{\textup{\bf(G)}}
    \simeq
    \bigoplus_{d\parallel N}
    M_{a,b} (\SO(\Lambda), \vartheta_d)_{\textup{\bf(G)}}. \]
\end{thm}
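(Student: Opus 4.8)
The plan is to prove the isomorphism in two stages, factoring the Ibukiyama correspondence through the compact inner form $\GU(2,B)$ of $\GSp_4$, where $B$ is the definite quaternion algebra over $\Q$ ramified exactly at $p_0$ and $\infty$, and then exploiting the exceptional isogeny $\mathrm{PGSp}_4 \cong \SO_5$. First I would reformulate both sides adelically: realize $S^{\textup{$p_0$-new}}_{k,j}(K(N))_{\textup{\bf(G)}}$ as a space of automorphic forms on $\GSp_4(\mathbb{A}_\Q)$ with trivial central character, paramodular level $K(N)$ at the finite places, and, at $\infty$, the holomorphic discrete series (or limit thereof) attached to the weight $(k,j)$; and realize each $M_{a,b}(\SO(\Lambda),\vartheta_d)$ as a space of algebraic modular forms in the sense of Gross on the definite special orthogonal group $\SO(Q_{N,p_0})$ with coefficients in $W_{a,b}$ twisted by $\vartheta_d$.

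Second, for the transfer from $\GSp_4$ to $\GU(2,B)$ I would invoke the global Jacquet--Langlands-type correspondence of van Hoften \cite{vH} and R\"osner--Weissauer \cite{RW}, as extended by \cite{dprt:}: a cuspidal automorphic representation $\pi$ of $\GSp_4$ of general type with trivial central character transfers to an automorphic representation $\pi'$ of $\GU(2,B)$ having the same Satake parameter at every place where $B$ splits, and the hypothesis $p_0 \parallel N$ ensures that $\pi_{p_0}$ is of a type for which the local transfer to the division algebra $B_{p_0}$ is nonzero and the paramodular local newvector theory is well-behaved. The decisive local input at the remaining primes $p \mid N$ is the newvector theory of Roberts--Schmidt \cite{RS06}: for each local representation type one must match the dimension of the space of $K(N)_p$-fixed vectors in $\pi_p$, together with the action of the local Atkin--Lehner involution, against the corresponding data on the orthogonal side.

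Third --- and this is where the sum over unitary divisors $d \parallel N$ appears --- I would pass from $\GU(2,B)$ to $\SO(Q_{N,p_0})$ through the exceptional isomorphism, which identifies automorphic forms on $\GU(2,B)$ with trivial central character with orthogonal modular forms on the quinary quadratic space naturally attached to $\GU(2,B)$; here the Eichler invariants ($-1$ at $p_0$ and $+1$ elsewhere) are precisely the record of the ramification of $B$ and the choice of order, which is what pins down the genus $Q_{N,p_0}$. At each prime $p \mid N$ the local invariants entering the definition of $M_{a,b}(\SO(\Lambda))$ carry the radical character $\vartheta_p$, and the individual summands $M_{a,b}(\SO(\Lambda),\vartheta_d)$ amount to recording, at each such $p$, the value $\pm 1$ of $\vartheta_p$; under the correspondence this $\pm$ choice matches the local Atkin--Lehner sign of $\pi_p$, so that running over all unitary divisors $d \parallel N$ exactly reassembles $\bigoplus_{d \parallel N} M_{a,b}(\SO(\Lambda),\vartheta_d)_{\textup{\bf(G)}}$. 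Matching the archimedean components then reduces to the exceptional-isomorphism computation that the weight-$(k,j)$ holomorphic discrete series corresponds to the irreducible $\SO_5(\C)$-representation $W_{a,b}$ with $a = k+j-3$ and $b = k-3$, which is also why one requires $a \equiv b \pmod{2}$.

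Finally, to upgrade the bijection of Hecke eigensystems obtained in this way to an isomorphism of Hecke modules, I would verify that for every $p \nmid N$ the paramodular operators $T_{p,1}$ and $T_{p,2}$ act on the Satake parameter of $\pi_p$ exactly as the $p$-neighbour Hecke operators on $\SO(Q_{N,p_0})$ act on the transferred parameter; this follows from the Satake isomorphism together with the explicit identification of dual groups under $\mathrm{PGSp}_4 \cong \SO_5$. The restriction to general type \textbf{(G)} is used throughout: the Saito--Kurokawa type \textbf{(P)} and Eisenstein-type representations are non-tempered or non-cuspidal, so the Jacquet--Langlands transfer and the relevant multiplicity statements behave differently, while the Yoshida type \textbf{(Y)} forms --- which arise from pairs of elliptic modular forms --- do occur among orthogonal modular forms and must be removed; on the complementary general-type locus one has a clean tempered cuspidal picture, with multiplicity one on $\GU(2,B)$ and the exact Roberts--Schmidt newform count on $K(N)$. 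I expect the main obstacle to be precisely this local bookkeeping at the primes dividing $N$: establishing, prime by prime and representation type by representation type, that the paramodular local newform dimension equals the dimension of the orthogonal local invariants in each $\vartheta_p$-eigenspace, and that Atkin--Lehner signs correspond to radical characters. This is the technical heart of \cite{dprt:}, and also the place where the exceptional behaviour --- the ``pesky asterisk'' recorded in \Cref{conj:dohweaklyeis} --- has to be confronted.
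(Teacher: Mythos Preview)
The paper does not supply a proof of this theorem: it is stated with attribution to \cite[Theorem 9.6]{dprt:} and used as a black box. What the paper does provide is a narrative summary of the strategy behind that reference, namely the two-step Ibukiyama correspondence via the compact inner form $\GU(2,B)$ (with $B$ definite, ramified at $p_0$ and $\infty$), the first step being the Jacquet--Langlands-type transfer of van Hoften and R\"osner--Weissauer and the second the passage to $\SO(Q_{N,p_0})$ through the exceptional isogeny $\mathrm{PGSp}_4 \simeq \SO_5$. Your outline follows exactly this architecture and is consistent with the surrounding exposition in the paper: the choice of Eichler invariants encoding the ramification of $B$, the indexing by unitary divisors $d \parallel N$ recording Atkin--Lehner signs via radical characters (stated just after the theorem, citing \cite[Theorem 10.1]{dprt:}), the archimedean matching $(k,j) \leftrightarrow (a,b)=(k+j-3,k-3)$, and the exclusion of types \textbf{(P)}, \textbf{(Y)}, and Eisenstein.

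So there is nothing to compare your argument \emph{against} here; your sketch is a faithful pr\'ecis of the approach the paper attributes to \cite{dprt:}. One small caution: you describe the reason for excluding type \textbf{(Y)} as their occurring among orthogonal modular forms and needing removal, which is correct, but the matching of multiplicities for \textbf{(Y)} on the two sides is more delicate than a simple presence/absence (the paper handles this separately using \cite[Theorem 7.5.6]{gsp4new} and \cite[Proposition 9.1]{dprt:}); and you correctly flag that the Eisenstein contribution is where \Cref{conj:dohweaklyeis} enters, though strictly speaking that conjecture is about the orthogonal side rather than the transfer itself.
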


In the isomorphism of \Cref{thm:isomhecke}, the paramodular forms corresponding to the summand $M_{a,b}(\SO(\Lambda),\vartheta_d)_{\text{\bf(G)}}$ have their Atkin-Lehner signs
$\varepsilon_p$ determined by $d$:
we have  $\varepsilon_{p_0}=-1$ if and only if $p_0\nmid d$, and
for primes $p\neq p_0$ we have $\varepsilon_p=-1$ if and
only if $p\mid d$ \cite[Theorem 10.1]{dprt:}.

In our analysis below, we can compute explicitly the forms of type \textbf{(P)} and 
\textbf{(Y)} using lifts.  To obtain only forms of type \textbf{(G)},
we need to identify those orthogonal modular forms of Eisenstein type,
which should be given as follows \cite[Proposition 9.8 and the comment
afterward]{dprt:}.

\begin{conj} \label{conj:dohweaklyeis}
The subspace of $M_{a,b} (\SO(\Lambda), \vartheta_d)$ spanned by forms of
Eisenstein type is the span of constant functions when $(a,b)=(0,0)$
(i.e., $(k,j)=(3,0)$) and is trivial otherwise.   
\end{conj}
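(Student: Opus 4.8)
The plan is to translate the statement into the Arthur classification for the definite form of $\SO_5$ and then read off the contribution of the Eisenstein-type parameters.  Since $Q_{N,p_0}$ is positive definite, $\SO(Q_{N,p_0})$ is anisotropic both over $\Q$ and over $\mathbb{R}$, so the automorphic spectrum is entirely discrete and every constituent of $M_{a,b}(\SO(\Lambda),\vartheta_d)$ lies in an automorphic representation with a global Arthur parameter, i.e.\ a discrete symplectic parameter $\psi=\pi_1[d_1]\boxplus\cdots\boxplus\pi_r[d_r]$ for $\widehat{\SO_5}(\C)=\Sp_4(\C)$ with $\sum_i(\dim\pi_i)\,d_i=4$ and the $\pi_i[d_i]$ pairwise distinct.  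By Arthur's multiplicity formula (extended to the relevant inner forms of $\SO_5$, in the explicit shape worked out for definite orthogonal groups by Ta\"ibi, refining Chenevier--Lannes, and compatibly with \Cref{thm:isomhecke} and \cite{dprt:}), such a $\psi$ can contribute $W_{a,b}\otimes\vartheta_d$ only if (i) the archimedean infinitesimal character of $\psi$ equals $\lambda+\rho$, where $\lambda$ is the highest weight of $W_{a,b}$ — so $\lambda=0$, equivalently $\lambda+\rho=\rho$, exactly when $(a,b)=(0,0)$; (ii) the archimedean $A$-packet of $\psi_\infty$ for the compact group $\SO(Q_{N,p_0})(\mathbb{R})$ contains the finite-dimensional representation $W_{a,b}$; and (iii) the finite places carry packet members compatible with $\Lambda$ and with $\vartheta_d$.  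For $k\geq 3$ (including the boundary case $(a,b)=(0,0)$) the weight $\lambda+\rho$ is regular, so the $\GSp_4(\mathbb{R})$-component of a \emph{tempered} $\psi$ is a genuine (vector-valued) holomorphic discrete series, which is not a constituent of a representation induced from a proper parabolic; hence a form is of Eisenstein type precisely when $\psi$ is \emph{non-tempered}, i.e.\ some $d_i\geq 2$.  It thus suffices to enumerate the non-tempered discrete symplectic $\psi$ satisfying (i)--(iii), beyond those already produced by the explicit Saito--Kurokawa and Yoshida lifts that handle types \textbf{(P)} and \textbf{(Y)}.

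A short combinatorial check shows the non-tempered discrete symplectic parameters for $\Sp_4(\C)$ are exactly: $\chi\boxtimes\nu_4$, the trivial representation up to a quadratic twist $\chi$ (Schmidt's type \textbf{(F)}); $\chi[2]\boxplus\chi'[2]$ with $\chi\neq\chi'$ quadratic (type \textbf{(B)}); $\pi'[1]\boxplus\chi[2]$ with $\pi'$ a cuspidal representation of $\GL_2$ of symplectic type (an elliptic newform of even weight) and $\chi$ quadratic; and $\sigma[2]$ with $\sigma$ a cuspidal representation of $\GL_2$ of orthogonal type (a dihedral newform).  Type \textbf{(B)} never contributes: the archimedean infinitesimal character of $\chi[2]\boxplus\chi'[2]$ is $\{\pm\tfrac12,\pm\tfrac12\}$, which is not regular, hence never of the form $\lambda+\rho$ here, so (i) fails.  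Type \textbf{(F)} contributes only for $(a,b)=(0,0)$, since the infinitesimal character of $\chi\boxtimes\nu_4$ is $\rho$.  The remaining two families give representations CAP with respect to the Siegel parabolic; the untwisted $\pi'[1]\boxplus\mathbf 1[2]$ is the classical Saito--Kurokawa lift (type \textbf{(P)}), and I expect the twisted variant ($\chi\neq\mathbf 1$) and the dihedral family $\sigma[2]$ to lie among the lifts of type \textbf{(P)} as well.  This last point is the main obstacle: for suitable weights of $\pi'$ (weight $2k-2$ in scalar weight $(k,0)$, hence weight $4$ when $(a,b)=(0,0)$) and of $\sigma$ (weight $j+3$ in weight $(3,j)$, hence weight $3$ when $(a,b)=(0,0)$) these parameters have the right infinitesimal character to appear, so one must show either that they are genuinely of type \textbf{(P)}, hence removed by the lifts, or else that they do not occur in $M_{a,b}(\SO(\Lambda),\vartheta_d)$ for the genus $Q_{N,p_0}$.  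I would attack this on two fronts: (a) compute the compact-form archimedean $A$-packet of $\psi_\infty$ via the Adams--Johnson construction and show it is empty or omits $W_{a,b}$, with the sign character on the real component group as the key input; and (b) show that the conductor of $\sigma[2]$, or of the twisted lift — rigidly pinned down by the discriminant of the underlying imaginary quadratic field and the conductor of the associated Hecke character — is incompatible with $Q_{N,p_0}$ having Eichler invariant $-1$ at exactly one prime $p_0$, $+1$ at all other primes, and half-discriminant $N$.  Carrying this out uniformly in $N$ is what has so far kept the statement conjectural; I expect (a), which interacts with the choice of $\vartheta_d$ through the local component groups, to be the subtler half.

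Granting the above, only $\psi=\chi\boxtimes\nu_4$ survives, and only when $(a,b)=(0,0)$.  Such parameters correspond to the one-dimensional automorphic representations of $\SO(Q_{N,p_0})(\mathbb{A}_{\Q})$ — the characters factoring through the spinor norm $\SO(Q_{N,p_0})(\mathbb{A}_{\Q})\to\mathbb{A}_{\Q}^\times/\bigl((\mathbb{A}_{\Q}^\times)^2\,\Q^\times\bigr)$.  To finish, I would show that the group of such characters trivial on $\SO(Q_{N,p_0})(\mathbb{R})$ and on $\prod_p\Aut(\Lambda_p)$ and transforming by the prescribed $\vartheta_d$ is one-dimensional, spanned by the (possibly $\vartheta_d$-twisted) constant function: this is a spinor-class-group computation for the genus of half-discriminant $N$ with Eichler invariant $-1$ at $p_0$ and $+1$ at the other primes, carried out with the explicit local lattice descriptions of \cite[\S5]{dprt:}; concretely, a quadratic Hecke character of $\Q$ that is unramified away from $d$, unramified at $\infty$, and compatible with the $\Aut(\Lambda_p)$ is forced to be the unique character ``detecting'' $d$ (just the trivial character when $d=1$).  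Since the constant function is visibly of Eisenstein type — its $T_{p,i}$-eigenvalues being the Eisenstein values attached to $\chi\boxtimes\nu_4$ — this gives exactly the span of constant functions for $(a,b)=(0,0)$ and nothing for $(a,b)\neq(0,0)$.
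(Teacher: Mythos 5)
The statement you are addressing is stated in the paper as a \emph{conjecture}, and the paper does not prove it: the only thing established there is the proposition immediately following, namely that \Cref{conj:dohweaklyeis} holds for squarefree $N$, and that is done by a dimension comparison (matching the construction of \cite[Proposition 9.8]{dprt:} against known dimension formulas), not by the representation-theoretic route you take. Your text is, as you yourself acknowledge, a plan rather than a proof, and the places where it stops are exactly the places where the statement is genuinely open.

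Concretely, the framework is sound and consistent with \cite[\S 9]{dprt:} and \cite{Schmidt}: since the group is anisotropic the spectrum is discrete, the non-tempered discrete parameters of $\Sp_4(\C)$ are the four families you list, type \textbf{(B)} is killed by irregularity of its infinitesimal character, and $\sigma[4]$ accounts for the constants precisely in weight $(0,0)$. But two essential steps are missing. First, the unconditional availability of Arthur's multiplicity formula for the definite inner form of $\SO_5$, at the specific non-maximal level structures $\Lambda$ and with the characters $\vartheta_d$ occurring here, is itself a nontrivial input that you would have to justify rather than invoke. Second, and decisively, your treatment of the Soudry-type parameters $\sigma[2]$ and the twisted Saito--Kurokawa parameters is a statement of intent (``I would attack this on two fronts''), not an argument: one must actually compute the archimedean Adams--Johnson packet and the local packets at the ramified primes and show no member is compatible with $(\Lambda,\vartheta_d)$, \emph{uniformly in $N$} --- note in particular that $\sigma[2]$ is of Soudry type \textbf{(Q)}, not of type \textbf{(P)}, so it cannot be disposed of by appealing to the Saito--Kurokawa lifts that the method already removes. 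The closing spinor-character computation for the $\sigma[4]$ family is likewise asserted rather than carried out. None of this is a criticism of the strategy, which may well be the right way to eventually resolve the conjecture; but as written it does not close the gap. If you want something provable now, restrict to squarefree $N$, where the dimension-count argument of \cite[Proposition 9.8]{dprt:} already gives the result.
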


\begin{prop}
\textup{\Cref{conj:dohweaklyeis}} holds when $N$ is squarefree.
\end{prop}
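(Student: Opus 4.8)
The plan is to rephrase ``of Eisenstein type'' on the orthogonal side in terms of the attached automorphic representation of $\GSp_4$, and then to exploit the rigidity of the residual spectrum of $\Sp_4$. Recall that, under the correspondence of \cite{dprt:} underlying \Cref{thm:isomhecke}, a Hecke eigenform in $M_{a,b}(\SO(\Lambda),\vartheta_d)$ corresponds to an automorphic representation $\pi$ of $\GSp_4(\mathbb{A})$ with trivial central character lying in the image of Jacquet--Langlands transfer from the definite inner form $\GU(2,B)$, where $B$ is ramified exactly at $\infty$ and $p_0$ (consistent with the Eichler invariants being $-1$ at $p_0$ and $+1$ elsewhere); in particular $\pi$ is unramified outside $N$, has a small vector (paramodular level at most $p$) at each $p\mid N$, and has its components at $\infty$ and $p_0$ constrained by the ramification of $\GU(2,B)$ there. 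By definition the eigenform is of Eisenstein type precisely when $\pi$ is non-cuspidal, hence, $\pi$ being automorphic, \emph{residual}. So the proposition reduces to classifying, for squarefree $N$, the residual $\pi$ that arise this way, discarding those of Saito--Kurokawa type \textbf{(P)} (handled separately via lifts), and showing that the rest have weight $(k,j)=(3,0)$ and correspond to constant functions.

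The key input is the structure of the residual spectrum of $\Sp_4$ (Kim; equivalently of $\GSp_4$ with trivial central character; see also \cite{Schmidt} and the references there): apart from the Saito--Kurokawa representations attached to the Siegel parabolic, the residual automorphic representations are the one-dimensional $\chi\circ\mu$ with $\chi$ a quadratic Hecke character of $\Q$. A putative residual or CAP representation supported on the Klingen parabolic must also be excluded: for $\Sp_4$ none occurs, but in any event such a Langlands quotient is not essentially square-integrable at $p_0$, hence cannot be a Jacquet--Langlands transfer from the division-algebra local group there, while its archimedean component is incompatible with the compact real form $\GU(2,B)(\mathbb{R})$; and a representation of Yoshida type \textbf{(Y)} is cuspidal, hence not of Eisenstein type at all. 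Thus, among the residual $\pi$ arising here, those not of Saito--Kurokawa type are exactly the one-dimensional $\pi=\chi\circ\mu$, and it is these that produce the orthogonal modular forms of Eisenstein type.

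It remains to analyze these one-dimensional contributions. On the orthogonal side such a $\pi$ is a one-dimensional automorphic representation $\sigma$ of $\SO(Q)(\mathbb{A})$, namely a quadratic Hecke character composed with the spinor norm, and its archimedean component is a one-dimensional representation of the \emph{compact connected} group $\SO(Q)(\mathbb{R})\cong\SO(5)$, hence trivial. Therefore $\sigma$ contributes only when $W_{a,b}=W_{0,0}$, i.e.\ $(k,j)=(3,0)$, so $M_{a,b}(\SO(\Lambda),\vartheta_d)$ has no forms of Eisenstein type when $(a,b)\neq(0,0)$. For $N$ squarefree, the quadratic Hecke characters $\chi$ that can occur --- ramified only within $2N\infty$, with local behaviour at $p\mid N$ matching a vector fixed by $\Aut(\Lambda_p)$ up to sign --- are precisely those cutting out the radical characters $\vartheta_d$ for $d\parallel N$, as constructed in \cite[\S 7]{dprt:}; matching the Atkin--Lehner recipe from \Cref{thm:isomhecke} ($\varepsilon_{p_0}=-1$ iff $p_0\nmid d$, and $\varepsilon_p=-1$ iff $p\mid d$ for $p\neq p_0$) then singles out, for each $d$, the unique such $\chi$, whose realization in $M_{0,0}(\SO(\Lambda),\vartheta_d)$ is a scalar multiple of the constant function. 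This establishes \Cref{conj:dohweaklyeis} for squarefree $N$.

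I expect the main obstacle to be the middle step: delimiting the residual spectrum correctly --- ruling out any Klingen-parabolic residual or CAP contribution and cleanly separating the type \textbf{(P)} and \textbf{(Y)} pieces, so that the ``Eisenstein-type'' subspace is neither over- nor under-counted --- together with the bookkeeping in the last step, which for squarefree $N$ matches the admissible quadratic characters with the radical characters $\vartheta_d$ and checks that each contributes only the line of constants (that the realization is a constant function, not merely one-dimensional, is where the choice of class representatives and weak approximation enter). For non-squarefree $N$ one loses the simple classification of admissible local representations at the ramified primes --- higher paramodular conductor at $p^2\mid N$, several ramified quadratic characters at $p=2$, and non-uniqueness of paramodular newvectors --- which is why the general statement is left as \Cref{conj:dohweaklyeis}.
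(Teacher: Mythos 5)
Your argument takes a genuinely different route from the paper's, whose proof is a one-line dimension comparison: for squarefree $N$ the Ibukiyama--Kitayama dimension formulas for the paramodular side are available, and the argument of \cite[Proposition 9.8]{dprt:} verifies the conjecture by matching these against the dimensions computed on the orthogonal side, so that nothing beyond the constants can be left over. Your representation-theoretic approach is more ambitious, but it has a genuine gap at its first step. The reduction ``Eisenstein type $\Leftrightarrow$ $\pi$ non-cuspidal $\Leftrightarrow$ $\pi$ residual'' does not match the definition actually in play: Eisenstein type is a condition on the \emph{local} components $\pi_{f,p}$, and CAP representations --- cuspidal $\pi$ all of whose local components are noncuspidal constituents of parabolic inductions --- are of Eisenstein type without being residual. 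In Schmidt's notation the relevant representations are those of types \textbf{(B)}, \textbf{(Q)}, and \textbf{(F)}, of which only \textbf{(F)} (the one-dimensional representations) is residual. You discuss the Klingen-associated case \textbf{(Q)}, but the exclusion you offer --- that a non-square-integrable local component at $p_0$ cannot be a Jacquet--Langlands transfer from the ramified quaternionic group --- is exactly the kind of argument that fails for non-tempered Arthur packets, which do have members on the compact inner form: the Saito--Kurokawa packets of type \textbf{(P)} are the standard example, and they visibly contribute to the orthogonal spaces throughout this paper. The Borel-associated CAP representations of type \textbf{(B)} (Howe--Piatetski-Shapiro) are not addressed at all. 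Ruling out \textbf{(B)} and \textbf{(Q)} contributions on the orthogonal side is precisely the nontrivial content of \Cref{conj:dohweaklyeis}; it is what the dimension count accomplishes for squarefree $N$ and what remains open in general.

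The final step also leans on assertions that would need proof: that the admissible quadratic Hecke characters (type \textbf{(F)}) are exactly those realizing the radical characters $\vartheta_d$ for $d \parallel N$, each contributing the line of constants with multiplicity one in $M_{0,0}(\SO(\Lambda),\vartheta_d)$ and nothing else. The higher-weight vanishing via triviality of one-dimensional representations of the compact connected group $\SO(5)$ at the archimedean place is fine, but the bookkeeping over $d$ is stated rather than derived. If you could close the \textbf{(B)}/\textbf{(Q)} gap by a genuinely local argument, your approach would be more valuable than the paper's, since it would not be confined to squarefree $N$; as it stands, the proof is incomplete.
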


\begin{proof}
The same argument (comparing dimensions) in \cite[Proposition 9.8]{dprt:} applies.
\end{proof}

Thus, the correctness of our computations in nonsquarefree level are conditional on the truth of \Cref{conj:dohweaklyeis}: if this conjecture is false, it would mean that we have produced a counterfeit form which appears to be of type \textup{\bf(G)}.  We expect this never to happen, and we hope soon that this will be decisively resolved.

% The subscript \textbf{(G)} above stands for \emph{general type}.  % These forms appear as endoscopic lifts, so can be computed in other ways.  See below for further treatment.
% (In fact, all of the Saito--Kurokawa lifts in
% $S^{\text{$p_0$-new}}_{k,j}(K(N))$
% appear in
% $\bigoplus_{d\parallel N} M_{a,b} (\SO(Q_N), \theta_d)$ with the same
% multiplicity;
% however, the latter may contain forms of Saito--Kurokawa type that are
% old at $p_0$ and forms of Yoshida type, as described in
% Propositions 9.1 and 9.4 of \cite{dprt:}.)
% \jv{I'm not sure this is a helpful observation for the purposes of the present paper, even though it is a nice one!}

\section{Algorithmic comments} \label{sec:algcomments}

%In these cases the correspondence is proven and for a specified lattice we obtain the desired space as shown in \cite{dprt:}. 

\subsection*{Algorithm overview}

Algorithms to compute with orthogonal modular forms using lattice methods were exhibited by Greenberg--Voight \cite{GV}; a recent over\-view with concrete examples is given by Assaf--Fretwell--Ingalls--Logan--Secord--Voight \cite[\S 3]{AFILSV}.  These algorithms take as input an integral, positive definite quadratic form on a lattice $\Lambda$ and compute the action of Hecke operators on spaces of functions on the class set of $\Lambda$, with values in a weight representation.  The Hecke operators are computed as $p$-neighbors, after Kneser, using an algorithm to test lattice isomorphism due to Plesken--Souvignier.   
For evaluating the Hecke operator $T_{p,1}$, the running time complexity is dominated by $ O(hp^3)$ isometry tests, where $h$ is the class number of the lattice; for $T_{p,2}$, it is dominated by $O(hp^4)$ isometry tests.

In this way, we can compute the Hecke operators for all good primes $p \nmid N$ and bad primes $p \parallel N$ \cite{RT}, giving the Euler factors at these primes.  % In both cases, the Euler factors can be obtained by computing $p$ and $p^2$-neighbors.  
See the section below on Euler factors for bad primes $p^2\mid N$.

In order to compute all Dirichlet coefficients $a_n$ for $n \le M$ for the $L$-series attached to $f$, we need to compute the Hecke operators $T_{p,1}$ for $p \le M$ and the Hecke operators $T_{p,2}$ for $p \le \sqrt{M}$.  Hence the running time complexity for computing all of the first $M$ coefficients of the $L$-series is dominated by a total of $O\left(\displaystyle{\frac{hM^4}{\log M}}\right)$ isometry tests.

\subsection*{Implementation}

For reliability, we carried out and compared two separate implementations to compute the data, one in C and one in PARI/GP \cite{PARI/GP}. Eventually, these gave the same output.  The code can be found, respectively, at
\url{https://github.com/assaferan/omf5} and at \url{https://gitlab.fing.edu.uy/grama/quinary}. We have also used SageMath \cite{sage} for auxiliary operations on the data we produced.  

\begin{rmk}
% \jv{There is also a Magma implementation, which also agrees with the other two; it is slower, but more user friendly (and more general).}
In \cite{AFILSV}, the authors present a Magma \cite{Magma} implementation, to be found at \url{https://github.com/assaferan/ModFrmAlg}, which agrees with the other two.  It is more general (working in arbitrary dimension and over a totally real base field); our implementation is specialized, and more efficient for this case.
\end{rmk}

\subsection*{Algorithmic improvements}

In order to make the calculations of Hecke operators mentioned in the previous section more efficient, it is possible to take advantage of the action of the isometry group of the lattice. Indeed, two $p$-isotropic vectors in the same orbit of the isometry group $\Aut(\Lambda)$ will produce the same target lattice when applying the $p$-neighbor relation. It is also possible to save on isometry testing via taking the first few entries of the theta series for the lattice.  % \refr{The authors mention three algorithmic improvements, but they do not give any indication as to how much these improvements helped (was it 10 percent, a factor of 2, a factor of 10, or even an asymptotic improvement?)}  \eran{I'll do it.}  \gustavo{I'll do it.} 

To carry out this idea (which has been observed before), we present a few algorithmic improvements to further speed up the computation by a constant factor.

\begin{enumerate}
\item Instead of precomputing all the orbits of isotropic vectors under $\Aut(\Lambda)$, we order the isotropic vectors in a lexicographical order. Given a $\Z_p$-isotropic vector $v$, we compute its orbit under $\Aut(\Lambda)$, and proceed with the computation only when it is the minimal vector in its orbit. 

Using the automorphism group speeds up the running time by the average size of an orbit, hence by a factor of $\#\Aut(\Lambda)$. Note that even over all rank $5$ lattices $\Lambda$, the sizes $\#\Aut(\Lambda)$ remain bounded, hence this is only a constant factor improvement. Precomputation of the orbits \cite[\S 3]{AFILSV} also yields this improvement, but at the cost of using $O(p^3)$ memory for $T_{p,1}$ and $O(p^4)$ memory for $T_{p,2}$; here we reduce these memory costs without performing additional isometry tests or short vector computations.

\item We precompute the automorphism group of all lattices in the genus, and their conjugations into a single quadratic space, saving the cost of conjugation when computing the spinor norm. The factor that this improvements yields depends on the ratio $T_{\text{conj}} / T_{\theta}$ between the time spent on conjugation of matrices and the time spent on computing short vectors. In practice, in our C implementation, this improves the running time by an average factor of $1.3$. 

% compare commits 5e1bbc2a35ea81f5fc368834e850a1a7e0f6738f vs abafbc82a3471957e21913362e6a8acf9f48735d

\item If we have a list of the lengths of short vectors of the lattice we need only check for isometry with those members of the genus with the same list. For a given genus we measure the cost $T_{\isom}$ of isometry testing and the cost $T_{\theta}(B)$ of computing the short vectors of length up to $B$, and we choose $B$ that optimizes the total cost $\alpha(B) T_{\isom} +  T_{\theta}(B)$, where $\alpha(B)$ is the average number of collisions, with the lattices in the genus averaged by the size of their automorphism groups.
This follows from the fact that the frequency of appearance of a lattice $\Lambda$ as a $p$-neighbor is inversely proportional to $\#\Aut(\Lambda)$, as proven by Chenevier \cite{Chenevier}. 
In practice, in our C implementation, this yields an average speed up factor of $6.84$. 
\end{enumerate}

% compare commits 2717550d851f1863bc2f5ea95db04800f09c98d6 vs c0d460a3df2dea510b729184e1cff1f6ea044e1f

\begin{example}
For the case $N = 61$ with trivial weight, in the C implementation, it takes $113$ s to compute all eigenvalues for $p<100$ without these improvements. With all of them put together it takes $10.28$ s, giving a speedup factor of $11$. 
% Putting these together gives a substantial factor speedup (not just a small percentage improvement; it is a big constant factor, the number of automorphisms is bounded, so it is not an asymptotic improvement).  %\GT{Compare the case $N=61$ from previous papers: go from 1 CPU year to ...  Also compare to our current implementation when we turn off the improvements}.
% \eran{Turning off the improvements and running the $N = 61$ example, I get running time of 113 seconds for computing all eigenvalues for $p < 100$, compared with 10.28 seconds for the current implementation. That is a factor of 11 (allowing us to convert 1 CPU year to a month).}
\end{example}

\begin{example}
For the case $N = 61$, in the Pari/GP implementation, it takes $3881$ s to compute a row for the space of quinary modular forms for the representation  $(a,b)=(1,0)$ (corresponding to $(k,j)=(4,0)$) and the character $\theta_{61}$ without the isometry improvement.
With the isometry improvement it took $451$ s for a row that corresponds to a quadratic form with $96$ automorphisms. And $859$ s for a row that corresponds to a quadratic form with $12$ automorphisms. This gives us a speedup factor of $8.6$ and a factor of $4.5$ respectively.
\end{example}

\begin{example}
Again for the case $N = 61$, in the Pari/GP implementation, it takes $4440$ s to compute all a row for the space of quinary modular forms for weight $(a,b)=(1,1)$ and the character $\theta_{61}$, without the isometry improvement.
With the isometry improvement it took $779$ s for a row that corresponds to a quadratic form with $96$ automorphisms. And $1188$ s for a row that corresponds to a quadratic form with $12$ automorphisms. This gives us a factor of $5.7$ and a factor of $3.7$, respectively.
\end{example}

\subsection*{Detecting lifts}
%\gustavo{I'll do this}

Among the orthogonal forms we compute
we have forms that arise
from lifts (endoscopy),
just like Eisenstein series in the classical case.
Since those
lifts are classified and can be computed in other ways, we focus on
what remains on computing newforms of type \textbf{(G)}.

To discard forms of type \textbf{(P)} corresponding to Saito--Kurokawa lifts,
a single good Euler factor is enough: a form of type \textbf{(P)}
will have a Satake parameter $p^{1/2}$
that cannot otherwise appear by the Ramanujan
conjecture for non-CAP forms \cite{Weissauer-Ramanujan}.
The multiplicity for forms of type \textbf{(P)}
is given by \cite[Theorem 5.5.9]{gsp4new}; we do not need this as
the stated criterion will filter out old and new Saito--Kurokawa lifts
alike.

% This is not enough to discard forms of type \textbf{(Y)}, i.e. those
%corresponding to Yoshida lifts.

To discard forms of type \textbf{(Y)} corresponding to Yoshida lifts, it suffices to find
that a single good Euler factor is irreducible.
If all the computed degree 4 Euler factors are product of two degree 2
factors, we look in tables to conjecturally identify the form
as a Yoshida lift, as we know exactly which Yoshida
lifts should appear \cite[Proposition 9.1]{dprt:}; the radical
character is determined by the Atkin-Lehner signs which are given by
\cite[Theorem 10.1 (3)]{dprt:}.
The multiplicity of each Yoshida lift is given by
\cite[Theorem 7.5.6]{gsp4new} with Atkin-Lehner signs as in
\cite[Table (5.49)]{gsp4new}; those are local results so we have to
take the product over primes with Eichler invariant +1 (for primes with
Eichler invariant $-1$---i.e., $p_0$---Yoshida lifts are always new and
have local multiplicity one).
To identify
the lifts, a simple inspection and comparison of the traces often
suffices, otherwise we use the complete Euler factors.
%compute the lift from the classical modular forms and compare the Euler factors.
%\refr{The authors note that to detect Yoshida lifts, a simple inspection and comparison of traces "often suffices".  What happens when it doesn't suffice?}
%\eran{Actually find the lift from a classical modular form!}
%\GT{On it!  We also need to know the oldform lists}

% In this way, we might miss newforms (that look like lifts up to the precision computed).  This is not a problem for squarefree level.  In future work, we could use dimension formulas (which shouldn't be much harder than dimensions for classical forms).  

\subsection*{Newforms and oldforms}  

In all cases, we have a good guess as to what is new and old;
computing inductively we can verify that a form is not an oldform.
However, oldforms may appear in the space of orthogonal modular forms
with multiplicity.

Using \cite[Theorem 7.5.6]{gsp4new} we can compute the
multiplicity for each oldform with Atkin-Lehner signs given by
\cite[Table (5.49)]{gsp4new}. Again we must take the product over
primes with Eichler invariant $+1$; for primes with Eichler invariant
$-1$ the forms are always new with multiplicity one and the Atkin-Lehner
sign is changed (see \cite[Theorem 10.1 (1)]{dprt:}).

With these multiplicities, we can certify the oldform and newform
decomposition by computing more Hecke eigenvalues until the dimension
of a candidate oldspace with given eigenvalues matches the multiplicity.

\section{Running the calculation}

% \eran{I'll be writing here}
% \gustavo{Me too}
% \jv{Scope of our computation, data summary: how big is the data (80--90 MB?), how long did it take to compute}

\subsection*{Example}

We begin with an example to illustrate the calculations we performed.

% \refr{If it can be done in a reasonable amount of space, I think it would be useful to include one sample computation for the sake of illustration (e.g. here is how we decompose this space of level blah and weight blah, and here is how we compute the Hecke eigenvalues and Dirichlet coefficients for this particular form).  This would help to better orient the reader (both experts and nonexperts).  Add full details on one example (how the codes run, how to read the output, ...)}

% \refr{My only suggestion is that it would be
% nice to see a sample output showing the kinds of
% eigensystems/$L$-functions encountered, as well as demonstrating the process of ``sieving out'' old forms and endoscopic lifts.}

% \gustavo{We did $5\cdot 61$, that could be a template}
% \jv{We also did something in our ANTS paper}
% \eran{Let's do something that is not squarefree; where there is a discrepancy would also be a good idea to write it all out.}  

% \jv{We don't include the actual commands in this document; those should go in the README.}

Consider the space $S_{3,0}(K(312))$. 
Since $312 = 2^3 \cdot 3 \cdot 13$, we choose $p_0 = 13$, and produce a lattice $\Lambda$ with (half-)discriminant $312$ and Eichler invariant $-1$ at $13$ and $+1$ at $2$ and $3$. Explicitly we take $\Lambda=\Z^5$ equipped with the quadratic form having the following Gram matrix:
$$
\begin{pmatrix}
   2  & 0 & 1 & 0 & 1 \\
   0  & 2 & -1 & -1 & 1 \\
   1 & -1 & 4 & -1 & 0 \\
   0 & -1 & -1 & 6 & 0 \\
   1 & 1 & 0 & 0 & 12
\end{pmatrix}
$$
A quick computation shows that the class set of $\Lambda$ has cardinality $15$.  Computing the spaces $M_{0,0} (\SO(\Lambda), \theta_d)$ for squarefree $d \mid 312$ we find that their dimensions are given as in Table~\ref{tab:example_subspace_dims}.

\begin{table}[ht]
\hfuzz=2pt\relax
\centering
\begin{tabular}{l||c|c|c|c|c|c|c|c||c}
\hfill$d$ & 1 & 2 & 3 & 6 & 13 & 26 & 39 & 78 & Total \\
\hline\hline
% $\dim M_{0,0}^{\new} (\SO(\Lambda), \theta_d)_{\text{\bf(G)}} $ 
{\bf(G)}-new 
& 1 & 0 & 0 & 5 = 3 + 1 + 1 & 0 & 1 & 4 = 3 + 1 & 0 & 11 \\
% $\dim M_{0,0}^{\old} (\SO(\Lambda), \theta_d)_{\text{\bf(G)}} $
{\bf(G)}-old 
& 1 & 1 & 1 & 1 & 1 & 2 & 0 & 1 & 8 \\
%$\dim M_{0,0}^{\new}(\SO(\Lambda), \theta_d)_{\text{\bf(P)}} $ 
{\bf(P)}-new 
& 2 & 0 & 0 & 3 & 0 & 2 & 2 & 0 & 9 \\
% $\dim M_{0,0}^{\old}(\SO(\Lambda), \theta_d)_{\text{\bf(P)}} $ 
{\bf(P)}-old 
& 8 & 0 & 0 & 2 & 0 & 7 & 2 & 0 & 19 \\
%$\dim M_{0,0}^{\new}(\SO(\Lambda), \theta_d)_{\text{\bf(Y)}} $ 
{\bf(Y)}-new 
& 1 & 0 & 0 & 1 & 0 & 1 & 3 & 0 & 6\\
%$\dim M_{0,0}^{\old}(\SO(\Lambda), \theta_d)_{\text{\bf(Y)}} $ 
{\bf(Y)}-old
& 1 & 1 & 0 & 0 & 1 & 1 & 0 & 0 & 4\\
\hline \hline
% $\dim M_{0,0} (\SO(\Lambda), \theta_d) $ 
Total
& 14 & 2 & 1 & 12 & 2 & 14 & 11 & 1 & 57\\
\end{tabular}
\caption{Dimensions of subspaces of $M_{0,0}(\SO(\Lambda), \theta_d)_{(\text{A})}$ for $d \mid 312$}
\label{tab:example_subspace_dims}
\end{table}

As explained in the previous section, we calculate the space of Yoshida lifts:
\begin{equation}
\begin{aligned}
M_{0,0}^{\new}(\SO(\Lambda))_{\text{\bf(Y)}} &\simeq 
(S_2^{\new}(\Gamma_0(26)) \otimes S_4^{\new}(\Gamma_0(12))) \\
& \qquad \qquad \oplus (S_2^{\new}(\Gamma_0(39)) \otimes S_4^{\new}(\Gamma_0(8))) \\
& \qquad \qquad \oplus (S_2^{\new}(\Gamma_0(52)) \otimes S_4^{\new}(\Gamma_0(6))),
\end{aligned}
\end{equation}
leading to the corresponding dimension counts in Table~\ref{tab:example_subspace_dims}. Furthermore, the only Yoshida lifts that occur as oldforms are the images of the forms in 
$$
M_{0,0}^{\new}(\SO(\Lambda_{156}))_{\text{\bf(Y)}} \simeq S_2^{\new}(\Gamma_0(26)) \otimes S_4^{\new}(\Gamma_0(6))
$$
under the level-raising operators $\theta_2, \theta_2' \colon S_{3,0}(K(156)) \to S_{3,0}(K(312))$.

Similarly, we find that
$$
M_{0,0}^{\new}(\SO(\Lambda))_{\text{\bf(P)}} \simeq 
S_4^{\new,-}(\Gamma_0(312)) \oplus S_4^{\new,+}(\Gamma_0(24)),
$$
(plus and minus signs for the Atkin--Lehner involution), and
\begin{equation}
\begin{aligned}
M_{0,0}^{\old}(\SO(\Lambda))_{\text{\bf(P)}} &\simeq 
\bigoplus_{d \mid 24, d \ne 24} S_4^{\new,-}(\Gamma_0(13d)) \oplus S_4^{\new,+}(\Gamma_0(d)) \\
&\qquad\qquad \oplus 
\bigoplus_{d \mid 6} S_4^{\new,-}(\Gamma_0(13d)) \oplus S_4^{\new,+}(\Gamma_0(d));
\end{aligned}
\end{equation}
% JV: the level-raising operator is injective, and so we still get a Hecke invariant map without applying it; if you disagree, then we should add this notation to the Yoshida lifts.
% where $\alpha_4$ is the level-raising operator for classical modular forms;
this yields further dimension counts in Table~\ref{tab:example_subspace_dims}.

Finally, using our precomputed data from lower levels, we find that 
\[ \dim S_{3,0}(K(156))_{\text{\bf(G)}} = 3, \quad \dim S_{3,0}(K(104))_{\text{\bf(G)}} = 1. \] 
Using the more precise data of their Atkin-Lehner eigenvalues and applying the corresponding level-raising operators, we find that they contribute to an $8$-dimensional subspace, distributed between the different subspaces as described in the second row of Table~\ref{tab:example_subspace_dims}.

The subspace orthogonal to all of the above is the space of Siegel paramodular newforms of type $(\textbf{G})$ \cite[Theorem 9.9]{dprt:}, given in the top row.  For these, we have also included in the table the decomposition of the subspaces into Hecke irreducible submodules, i.e., Galois orbits of paramodular newforms. 

It then follows that $\dim S_{3,0}^{\new} (K(312))_{\text{\bf{(G)}}} = 11$, and the space is composed of $7$ Galois orbits of newforms, out of which $5$ newforms have rational eigenvalues and the $2$ remaining orbits have dimension $3$. 
Table~\ref{tab:example_subspace_dims_paramodular} gives the dimensions of various subspaces of $S_{3,0}(K(312))$.

\begin{table}[ht]
\hfuzz=2pt\relax
\centering
\begin{tabular}{c||c|c||c}
type & new & old & total \\
\hline
\bf{(G)} & 11 & 8 & 19 \\
\hline
\bf{(P)} & 9 & 19 & 28 \\
\hline \hline
Total & 20 & 27 & 47 \\
\end{tabular}
\caption{Dimensions of subspaces of $ S_{3,0}(K(312))$ }
\label{tab:example_subspace_dims_paramodular}
\end{table}

Table~\ref{tab:example_al_dimensions} gives the dimensions of the cuspidal new subspaces of automorphic type {\bf{(G)}} with specified eigenvalues for the Atkin-Lehner operators and the Fricke involution.

\begin{table}[ht]
\hfuzz=2pt\relax
\centering
\begin{tabular}{c|c|c||c|c}
2 & 3 & 13 & Fricke & dimension \\
\hline
$+$ & $+$ & $-$ & $-$ & $1$ \\
\hline
$+$ & $-$ & $+$ & $-$ & $4$ \\
\hline
$-$ & $+$ & $+$ & $-$ & $1$ \\
\hline
$-$ & $-$ & $-$ & $-$ & $5$ \\
\hline \hline
\multicolumn{3}{c||}{Plus space} & $+$ & $0$ \\
\multicolumn{3}{c||}{Minus space} & $-$ & $11$ \\
\end{tabular}
\caption{Dimensions of Atkin--Lehner subspaces of $ S_{3,0}^{\new}(K(312))_{\text{\bf{(G)}}} $ }
\label{tab:example_al_dimensions}
\end{table}

Finally, since we are able to compute the Hecke eigenvalues at good primes for each of the newforms, we can decompose $ S_{3,0}^{\new}(K(312))_{\text{\bf{(G)}}} $ to newform subspaces. Table~\ref{tab:example_eigenvalues} lists the Galois orbits of the Hecke eigenforms in this space, giving rise to such a decomposition.

\begin{table}[ht]
\hfuzz=2pt\relax
\centering
\begin{tabular}{c|c||r|r|r|r||c|c|c}
 \multirow{2}{*}{dimension} & \multirow{2}{*}{field} & \multicolumn{4}{c||}{Traces} & 
 \multicolumn{3}{c}{A-L signs} \\
 \cline{3-9}
& &  $a_5$ & $a_7$ & $a_{11}$ & $a_{17}$ & $2$ & $3$ & $13$ \\
\hline \hline
$1$ & $\Q$ & $-1$ & $-13$ & $-6$ & $63$ & $+$ & $+$ & $-$ \\
$1$ & $\Q$ & $-11$ & $3$ & $-16$ & $3$ & $+$ & $-$ & $+$ \\
$1$ & $\Q$ & $1$ & $-15$ & $14$ & $135$ & $-$ & $+$ & $+$ \\
$1$ & $\Q$ & $2$ & $-6$ & $-52$ & $44$ & $-$ & $-$ & $-$ \\
$1$ & $\Q$ & $-13$ & $-3$ & $-4$ & $-37$ & $-$ & $-$ & $-$ \\
$3$ & \href{http://www.lmfdb.org/NumberField/3.3.961.1}{\textsf{3.3.961.1}} & $-1$ & $-25$ & $-24$ & $-71$ & $+$ & $-$ & $+$ \\
$3$ & \href{http://www.lmfdb.org/NumberField/3.3.961.1}{\textsf{3.3.961.1}} & $-12$ & $28$ & $2$ & $-90$ & $-$ & $-$ & $-$ \\
\end{tabular}
\caption{Decomposition of $ S_{3,0}^{\new}(K(312))_{(\text{\bf{(G)}})} $ into newform subspaces}
\label{tab:example_eigenvalues}
\end{table}
Here, the field \textsf{3.3.961.1} is given by its LMFDB label: it is defined by a root of the polynomial $x^3-x^2-10x+8$.

\subsection*{Data and running time}

The data is available online \cite{omfdata}.  We computed the spaces of paramodular forms of level $N$ and weight $(k,j)$, the Hecke eigenforms and the eigenvalues of the Hecke operators in the following ranges:
\begin{itemize}
\item $(k,j)=(3,0)$, $D=N \leq 1000$, good $T_{p,i}$ with $p^i<200$
\item $(k,j)=(4,0)$, $D=N \leq 1000$, good $T_{p,1}$ with $p<100$, good $T_{p,2}$ with $p<30$
\item $(k,j)=(3,2)$, $D=N \leq 500$, good $T_{p,1}$ with $p<100$, good $T_{p,2}$ with $p<30$
\end{itemize}
% \jv{Approximate running time is $O(p^{k(5-k-1)})$ for $T_{p,k}$, so can we go up to $200$?}
% This gives a total of 7581 paramodular newforms of general automorphic type of weight $(3,0)$.

% We have computed a total of $4435$ paramodular newforms of general automorphic type of weight $(3, 2)$, $1363$ have squarefree level, and $3072$ have nonsquarefree level.

% We have computed a total of $10639$ paramodular newforms of general automorphic type of weight $(4, 0)$, $1363$ have squarefree level, and $3072$ have nonsquarefree level. \gustavo{Maybe a table for the last 3 paragraphs? I wrote and commented one to compare.}

The total counts are summarized in Table \ref{tab:my_label}.

\begin{table}[ht]
\hfuzz=2pt\relax
\centering
\begin{tabular}{c||c|c|c||c|c|c}
\multirow{2}{*}{$(k,j)$} & \multicolumn{3}{c|}{Newspaces} & \multicolumn{3}{c}{Newforms}\\\cline{2-7}
& sqfree $N$ & nonsqfree $N$ & total & sqfree $N$ & nonsqfree $N$ & total\\\hline\hline
$(3, 0)$ & 2\,764 & 4\,820 & 7\,584 & 52\,181 & 23\,853 & 76\,034\\
$(3, 2)$ & 1\,363 & 3\,072 & 4\,435 & 72\,551 & 29\,226 & 101\,777\\
$(4, 0)$ & 2\,856 & 7\,783 & 10\,639 & 287\,974 & 132\,380 & 420\,354\\
\end{tabular}
\caption{Newspace and newform data computed (abbreviating squarefree to ``sqfree'')}
\label{tab:my_label}
\end{table}
% Found a total of 4836 paramodular newforms at nonsquarefree levels.  % \jv{How many total?}

For squarefree levels, we use formulas from Ibukiyama--Kitayama \cite{IK} and Ibukiyama \cite{Ibukiyama} to double-check our results.

% \refr{Can the authors say anything about the relative performance of the Pari/GP and C implementations?  Why mention two implementations and not bother to compare them?  If this was done purely as a means to sanity check the results by computing the same data in two different ways, I applaud the authors for doing so, but they should say this (and I would still be curious about the performance differences).}    \eran{I'll do it.}  \gustavo{I'll do it.}  \jv{Add log file data comparing for two examples.}

We compare the performance of the two implementations (Pari and C) by comparing the computation time of a single row of the matrix representing the Hecke operator $T_{p,1}$ on the spaces $S_{3,0}(K(N))$ for various values of $N$ and $p$. We note that this step is the bottleneck of the computation, hence this comparison provides ample evidence for the differences in performance as the parameters scale.

\begin{table}[ht]
\centering
\begin{tabular}{l||c|c||c|c||c|c||c|c}
\backslashbox{$p$}{$N$} & \multicolumn{2}{c|}{61} & \multicolumn{2}{c|}{167} & \multicolumn{2}{c|}{262} & \multicolumn{2}{c}{334} \\
\hline
 & C & Pari & C & Pari & C & Pari & C & Pari \\
\hline \hline
$31$  & 0.05 s & 1.27 s & 0.45 s  & 1.31 s & 0.37 s  & 1.58 s & 0.49 s  & 2.46 s \\
$41$  & 0.10 s & 2.86 s & 0.80 s  & 2.84 s & 0.94 s  & 3.56 s & 1.02 s  & 5.53 s\\
% $101$ & 1.09 s & 41.31 s & 6.71 s  & 41.11 s & 7.86 s  & 50.82 s & 12.56 s & 79.83 s\\
$101$ & 1.09 s & 41.3 s & 6.71 s  & 41.1 s & 7.86 s  & 50.8 s & 12.6 s & 79.8 s\\
% $127$ & 2.00 s & 81.75 s & 10.07 s & 81.39 s & 14.02 s & 100.28 s & 20.12 s & 157.99 s\\
$127$ & 2.00 s & 81.8 s & 10.1 s & 81.4 s & 14.0 s & 100 s & 20.1 s & 158 s\\
% $199$ & 6.49 s & 312.36 s & 37.16 s & 311.98 s & 42.16 s & 383.91 s & 75.06 s & 606.74 s\\
$199$ & 6.49 s & 312 s & 37.16 s & 312 s & 42.16 s & 384 s & 75.06 s & 607 s\\
\end{tabular}
\caption{Timings for computing $T_{p,1}$}
\label{tab:timings_table_1}
\end{table}

% \begin{table}[ht]
% \centering
% \begin{tabular}{l||c|c||c|c||c|c||c|c}
% \backslashbox{$p$}{$N$} & \multicolumn{2}{c|}{498} & \multicolumn{2}{c|}{503} & \multicolumn{2}{c|}{993} & \multicolumn{2}{c}{1993} \\
% \hline
%  & C & Pari & C & Pari & C & Pari & C & Pari \\
% \hline \hline
% $31$  & 0.63 s & 1.81 s & 1.78 s  & 1.28 s & 1.01 s  & 3.62 s & 0.72 s  & 1.84 s\\
% $41$  & 1.06 s & 4.09 s & 2.43 s  & 2.91 s & 3.16 s  & 7.87 s & 1.61 s  & 3.91 s \\
% $101$ & 8.83 s & 58.88 s & 12.17 s  & 42.04 s & 31.82 s  & 111.52 s & 25.06 s & 54.51 s\\
% $127$ & 15.00 s & 115.73 s & 18.88 s & 83.39 s & 63.38 s & 219.39 s & 53.61 s & 108.67 s\\
% $199$ & 52.79 s & 442.52 s & 54.83 s & 318.71 s & 225.69 s & 842.16 s & 192.25 s & 416.50 s\\
% \end{tabular}
% \caption{Timings for computing $T_{p,1}$, cont.}
% \label{tab:timings_table_2}
% \end{table}

For Hecke irreducible spaces of dimension $\geq 20$, we only store the traces; to avoid painful calculations in an extension field, we compute modulo a prime $\frakp$ of degree $1$ which is large enough, and reconstruct.

The total data takes approximately 200 MB of disk space and took a total of $4575$ hours of CPU time on a standard processor (Intel Xeon Gold 6240 CPU, 2.60GHz). Computing the eigenvalues for $p < 200$ took a total of $13869$ hours of CPU time on a standard processor (AMD Ryzen Threadripper 2970WX CPU, 2.20 GHz) equipped with Ubuntu 22.04.2 operating system, achieving maximal memory usage of 303276 KB. % \jv{Put in description of memory usage, OS, version numbers.}

\subsection*{Bad Euler factors} 

When $p \parallel N$, the local Euler factor can be computed via neighbors (see  \cref{sec:algcomments}).  When $p^2\mid N$, the local Euler factor has degree at most 2 and is given explicitly by Roberts--Schmidt \cite[Theorem 7.5.3]{gsp4new} in terms of eigenvalues for a pair of Hecke operators which correspond to $p$ and $p^2$-neighbors \cite[Proposition 8.5]{dprt:} (extended in the same way for $p\neq p_0$).
%%% FIXME: rework this last part after sect 4 "bad factors"

The first such bad factor occurs at level $76=2^2 \cdot 19$: the bad Euler factors are
\begin{align*}
L_2(f_{76}, X) &= 1 + 5X + 2^3X^2 \\
L_{19}(f_{76},X) &= (1 + 19X)(1-50X+19^3X^2).
\end{align*}
(Since $19 \parallel 76$, we compute the latter without any guesswork.)
% \eran{Here we actually haven't verified the second one (needs to compute eigenvalues at least up to 361), only verified that it is $1-31X+O(X^2)$}
The next one occurs at level $96 = 2^5 \cdot 3$, with 
\begin{align*}
L_2(f_{96}, X) &= 1 + 4X + 2^3X^2 \\
L_3(f_{96}, X) &= (1-3X)(1+8X+3^3X^2).
\end{align*}
We used our truncated Dirichlet series to check that the functional equation is satisfied up to 22 decimal digits of precision in each case, using both Magma and Pari/GP.  % we have used the functions \defi{CFENew} in Magma and \defi{lfuncheckfeq} in PARI/GP.
(In practice, for $p^2\mid N$ we found easier to guess the appropriate Hecke operator via reconstruction by checking that the functional equation for the $L$-function is satisfied---ruling out all possibilities but one.)

In order to rigorously verify that the functional equation is satisfied to a given precision, one would need the first $C \sqrt{N}$ embedded Dirichlet coefficients (with sufficient precision) for a constant $C$ (see below), carrying out all floating-point calculations using rigorous error bounds and interval arithmetic. A generic library to carry out such computations, due originally to David Platt \cite{lfunc}, has been developed; it includes functionality to estimate the constant $C$ as a function of the motivic data.  However, for $L$-functions of Siegel paramodular forms, it yields an estimated constant of $C \approx 138.84$, meaning one needs the Dirichlet coefficients for $p^n < 1211$ for $f_{76}$ and for $p^n < 1361$ for $f_{96}$. This is in stark contrast to the small constant $C = 5$ which suffices for genus $2$ curves \cite{genus2} or $C = 0.08k \log k + 24$ sufficient for classical modular forms of weight $k$ \cite{CMFs}.  Therefore, at the moment we satisfy ourselves with the heuristic of checking the functional equation for the truncated Dirichlet series that we have computed.

\subsection*{Statistics}

Having compiled such a database with substantial amount of information, using the LMFDB \cite{LMFDB}, we are able to provide some arithmetic statistics of interest, examples of which we present in Table~\ref{tab:dimension_by_level} and Table~\ref{tab:aut_type_by_level}, illustrating the distribution of the dimension of the Galois orbit of a newform of type $\bf{(G)}$, and the distribution of the automorphic type, respectively, as the level increases.

\begin{table}[ht]
\hfuzz=6pt\relax
\centering
\begin{tabular}{c||r|r|r|r|r|r|r||r}
\multirow{2}{*}{level} & \multicolumn{8}{c}{dimension}\\
\cline{2-9}
& 1 & 2 & 3 & 4 & 5-10 & 11-20 & 21-266 & Total \\
\hline
\hline
\multirow{2}{*}{1-100} & 14 & 1 &  & & & & & 15 \\
& 0.18\% & 0.01\% & & &  & &  & 0.20\% \\ 
\hline
\multirow{2}{*}{101-300} & 258 & 132 & 32 & 42 & 59 & 11 & 1 & 535 \\
& 3.40\% & 1.74\% & 0.42\% & 0.55\% & 0.78\% & 0.15\% & 0.01\% & 7.05\% \\ 
\hline
\multirow{2}{*}{301-600} & 864 & 384 & 183 & 156 & 358 & 164 & 155 & 2264 \\
& 11.40\% & 5.06\% & 2.41\% & 2.06\% & 4.72\% & 2.16\% & 2.04\% & 29.85\% \\ 
\hline
\multirow{2}{*}{601-999} & 1436 & 693 & 399 & 275 & 684 & 474 & 809  & 4770 \\
& 18.93\% & 9.14\% & 5.26\% & 3.63\% & 9.02\% & 6.25\% & 10.67\% & 62.90\% \\ 
\hline
\hline
\multirow{2}{*}{Total} & 2572 & 1210 & 614 & 473 & 1101 & 649 & 965 & 7584 \\
& 33.91\% & 15.95\% & 8.10\% & 6.24\% & 14.52\% & 8.56\% & 12.72\% &  \\ 
\end{tabular}
\caption{Distribution of levels and dimensions in weight $(3,0)$}
\label{tab:dimension_by_level}
\end{table}

\begin{table}[ht]
\hfuzz=2pt\relax
\centering
\begin{tabular}{c||r|r|r||r}
\multirow{2}{*}{level} & \multicolumn{4}{c}{automorphic type}\\
\cline{2-5}
& \bf{(G)} & \bf{(P)} & \bf{(Y)} & Total \\
\hline
\hline
\multirow{2}{*}{1-100} & 15 & 169 & 11 & 195 \\
& 0.10 \% & 1.15 \% & 0.07 \% & 1.29 \% \\ 
\hline
\multirow{2}{*}{101-300} & 535 & 787 & 166 & 1488 \\
& 3.53 \% & 5.19 \% & 1.09 \% &  9.81 \% \\ 
\hline
\multirow{2}{*}{301-600} & 2264 & 1613 & 709 & 4586 \\
& 14.93 \% & 10.64 \% & 4.68 \% & 30.25 \% \\ 
\hline
\multirow{2}{*}{601-999} & 4770 & 2476 & 1647 & 8893 \\
& 31.46 \% & 16.33 \% & 10.86 \% &  58.65 \%  \\ 
\hline
\hline
\multirow{2}{*}{Total} & 7584 & 5045 & 2533 & 15162 \\
& 50.02 \% & 33.27 \% & 16.71 \% &  \\ 
\end{tabular}
\caption{Distribution of automorphic type in weight $(3,0)$}
\label{tab:aut_type_by_level}
\end{table}

\subsection*{Interesting examples}
When compiling a large amount of modular forms in a database, it is easy to view them as a whole. However, each form has its own unique features and set of characteristics that make them different from any other form.
We seize the opportunity to point out several specific forms of interest, and refer to their rich history in the literature and in previous works. We focus on weight $(k,j) = (3,0)$ in this section.

\begin{itemize}
    \item The smallest level in which a paramodular form of weight $(3,0)$ and automorphic type $ \bf{(G)} $ occurs is $61$. This form has Hecke eigenvalues $a_2 = -7, a_3 = -3, a_5 = 3, a_7 = -9, \ldots $, and was first described in \cite{PoorYuen}.
    \item The smallest level in which there is a paramodular newform of weight $(3,0)$ with negative sign in the functional equation is $167$. This form has Hecke eigenvalues $a_2=-8, a_3=-10, a_5=-4, a_7=-14, \ldots$. It was found in \cite{RT}, paving the way for the correspondence presented in \cite{dprt:}.
    \item The first level to exhibit a non-rational paramodular newform of type $ \bf{(G)} $ is $97$. The Hecke field is $\Q(\sqrt{5})$, and the first Hecke eigenvalues are $a_2 = -5+\varphi , a_3 = -4 \varphi, a_5 = -8 + 12 \varphi, a_7 = 9 - 2 \varphi, \ldots$, where $\varphi = \frac{1 + \sqrt{5}}{2}$.
    % This is not true!!! The Hecke eigenvalue at 5 is nonzero. Check again after we have all eigenvalues at bad primes!
    % \item The paramodular newform of type $ \bf{(G)} $ which has the most leading zero Hecke eigenvalues among the forms computed appears in level $275$, with the first non-zero Hecke eigenvalue being $a_{19} = 59\sqrt{5} - 5$. 
    \item The paramodular newform of type $ \bf{(G)} $ with the largest dimension of its Galois orbit in our database occurs in level $997$, and its dimension is $266$. 
    \item There are three paramodular newforms of type $ \bf{(G)} $ attaining the largest index of the Hecke ring inside the maximal order of the Hecke field among all paramodular forms computed. These forms appear in levels $384$, $448$ and $483$, with the two first ones having Hecke field $\Q(\zeta_7)^+$, and the third $\Q(\sqrt{5})$. In all three cases, the index of the Hecke ring inside the maximal order is $32$. 
\end{itemize}

% \eran{Any statistics about distributions of automorphism groups.}

% \eran{Maybe add a last section with our plans for the future - add more data, more information, degeneracy maps, etc. Add usefulness of using two implementations. }

\subsection*{Future directions}

We close with some possible directions for future work.

\begin{itemize}
\item An immediate goal is to gather data for higher weights $(k,j)$ in a range where computation is feasible and find the Euler factors at the bad primes to have the complete eigensystem for every form.

%We intend to complete the analysis of the multiplicities of oldforms and lifts, in order to certify the list of oldforms we have computed.  
\item To understand oldforms and newforms, another direction we may proceed is to implement the level-raising operators between the spaces of orthogonal modular forms and relating them to the corresponding level-raising map for paramodular forms given in \eqref{eqn:lvl1}--\eqref{eqn:lvl3}. This could be used to construct directly the spaces of newforms, without computing the exact multiplicity in which each oldform appears (or giving an independent check).

\item Finally, we also hope to complete the certification of the $L$-series for the newforms we have found, perhaps by further speeding up the computation, in order to allows us to compute a sufficient amount of Hecke eigenvalues. 
\end{itemize}

\bibliographystyle{amsalpha}

% \bibliographystyle{alpha}
% \bibliography{references}

\end{document}